\documentclass[12pt]{article}
\usepackage[a4paper, total={6in, 9in}]{geometry}

\usepackage{amssymb}

\usepackage{amsmath, amsfonts}
\usepackage{amsthm}
\usepackage{authblk}

\usepackage{multirow}
\usepackage{makecell}
\usepackage{array}
\usepackage{algorithm}%
\usepackage{algorithmicx}%
\usepackage{tabularx}
\usepackage{caption}
\usepackage{graphicx}

\newcolumntype{P}[1]{>{\centering\arraybackslash}p{#1}}

\def\R{\mathrm{\mathbb{R}}}
\def\F{\mathrm{\mathcal{F}}}
\def\N{\mathrm{\mathcal{N}}}
\def\M{\mathrm{\mathcal{M}}}
\def\L{\mathrm{\mathcal{L}}}
\def\C{\mathrm{\mathcal{C}}}

\def\u{\mathit{\hat{u}}}
\def\z{\mathit{\hat{z}}}
\def\x{\mathit{\hat{x}}}
\def\v{\mathit{\hat{v}}}
\def\w{\mathit{\hat{w}}}

\newtheorem{theorem}{Theorem}%

\newtheorem{proposition}[theorem]{Proposition}%

\newtheorem{corollary}[theorem]{Corollary}

\title{An inexact algorithm for stochastic variational  inequalities}

\author[1,3]{Emelin L. Buscaglia}
\author[2,4]{Pablo A. Lotito}
\author[1,3]{Lisandro A. Parente}

\affil[1]{Universidad Nacional de Rosario, Facultad de Ciencias Exactas, Ingenieria y Agrimensura, Av. Pellegrini 250 (2000), Rosario, Argentina}
\affil[2]{Universidad Nacional del Centro de la Provincia de Buenos Aires, Facultad de Ciencias Exactas, PLADEMA, Pinto 399 (7000), Tandil, Argentina}
\affil[3]{Centro Internacional Franco-Argentino de Ciencias  de la Informacion y Sistemas, CONICET - Universidad Nacional de Rosario, Bv. 27 de Febrero 210 bis, Rosario (2000), Argentina}
\affil[4]{CONICET, Buenos Aires, Argentina}
\affil[*]{Corresponding author: Emelin L. Buscaglia - buscaglia@cifasis-conicet.gov.ar }

\date{}                     
\setcounter{Maxaffil}{0}

\begin{document}

\maketitle

\begin{abstract}
We present a new Progressive Hedging Algorithm to solve Stochastic Variational Inequalities in the formulation introduced by Rockafellar and Wets in 2017, allowing the generated subproblems to be approximately solved with an implementable tolerance condition. Our scheme is based on Inexact Proximal Point methods and generalizes the exact algorithm developed by Rockafellar and Sun in 2019, providing stronger convergence results.  We also show some numerical experiments in two-stage Nash games.

\end{abstract}

\textit{\textbf{Keywords:}}
stochastic variational inequalities, progressive hedging algorithm, proximal point algorithm

\section{Introduction}\label{intro}

The study of variational inequalities (VI) enables tackling nonlinear optimization problems as well as complementarity and equilibrium problems in a unified manner, providing a computational framework that leads to the development of many numerical schemes. 

Given a nonempty closed convex set   $C$ in a Hilbert space $H$ and a map $F:H\to H$, the VI consists in finding $x\in C$ such that  
\begin{equation}\label{IV2}
0\in F(x) + N_C(x),
\end{equation}
where $N_C(\cdot)$ is the normal cone respect to $C$. For a comprehensive analysis and a broad set of applications in the deterministic framework, we refer to \cite{fp:03}. In the stochastic setting, the objective function $F$ and the constraints set $C$ in \eqref{IV2} may depend on elements $\xi$ of a probability space $\Xi$, i.e., $F:H\times\Xi\to H$ and $C:\Xi\rightrightarrows H$ may be random (set-valued) maps. If the dependence involves only the objective function, the \textit{expected value} formulation (SV) just replaces the function $F$ in \eqref{IV2} by its expectation $\mathbb E_\xi(F(x,\xi))$ (see  \cite{GYR:99,IJT:18,IJOT:19}. When the uncertainty also concerns the constraints set, the \textit{expected residual minimization} approach (ERM) considers a collection of separated variational inequalities
and finds a point $x\in H$ that better approximates all the solutions simultaneously by minimizing the expectation of some residual (see \cite{cf:05,cwz:12,cpw:17}).

By noting that SV and ERM approaches do not capture the dynamic features of multistage stochastic problems, a new formulation for stochastic variational inequalities (SVI) was introduced in \cite{RW:17}, extending the SV formulation to the multistage setting. Based on the idea of \textit{nonanticipativity}  (from \cite{RW:76}), this approach provides optimality conditions for multistage stochastic programming (MSP) and gives a  dual structure involving nonanticipativity multipliers that allows performing stochastic decomposition and enables the numerical scheme of \cite{RS:18}. It relies on the \textit{progressive hedging algorithm} (PHA) \cite{RW:91} and convergence results are obtained by proving that it falls within a \textit{proximal point method} (PPM) \cite{mar:72,roc:76b}. 
 The PPM solves inclusions of the form $0 \in T(x)$, where $T:H\rightrightarrows H$ is a  maximal monotone operator (see \cite{RW:varan}), by solving at each iteration the regularized subproblems 
\begin{equation}\label{RMI}0\in c_kT(z)+z-z_k,\end{equation}
being $z_k$ the current approximation and $c_k$ a regularization parameter. Under mild conditions, the method converges to a solution, even if the subproblems \eqref{RMI} are solved inexactly, provided that the sequence of errors is summable. 

Although  the use of the PPM implicitly allows the possibility of inexact resolution of the subproblems in the PHA, the scheme in \cite{RS:18} is presented as an \textit{exact} type algorithm with  no discussion on how to handle approximate solutions. This is an essential issue in practice since it could be a difficult task to assure the summability conditions in numerical implementations and, even so, the computational burden can be quite expensive. 
Inexact PPMs with summability type convergence conditions have been developed \cite{bq:98,com:97,e:98} until constructive relative error criteria were introduced in \cite{solsva:hybrid,solsva:Teps} and unified in \cite{solsva:unif} for \textit{hybrid} type algorithms.  An extension was introduced in \cite{pls:08} by allowing the use of a variable metric in the subproblems, also providing tighter convergence results. 

The aim of the present contribution is to extend the PHA of \cite{RS:18} to allow inexact solutions of the subproblems with constructive tolerance conditions, following \cite{solsva:unif} and using the convergence results from \cite{pls:08}.

\section{Preliminaries}\label{prelim}

This chapter provides the reader with concepts, results, and notation that will be needed throughout this work.

 Consider the dynamic  $N$-stage model from \cite{RW:17}. At  stage $k$, a decision $x_k\in \R^{n_k}$ is made and then the information $\xi_k$ from a finite probability space $\Xi^k$, is revealed. The vector $\xi=(\xi_1,...,\xi_N)\in \Xi$ is called the observation vector or \textit{scenario} and $x=(x_1,...,x_N)\in \R^n$, with $n=n_1+...+n_N$, the \textit{decision vector}. For the aim of this work, it will be enough to establish $\Xi$ as a finite probability space, gathering the information of every $\Xi^k$, endowed with a probability function $p>0$. 

For multistage models, it is fundamental to assume the principle of  \textit{nonanticipativity}, that is, the decision $x_k$ is allowed to depend on the previous observations $\xi_1,..,\xi_{k-1}$ but not on those made after them. 
The key idea in \cite{RW:17} is to express the nonanticipativity as an orthogonality condition on specially designed Hilbert spaces. Let $\mathcal{L}_n$ be the space of decision functions 
$$\mathcal{L}_n= \{x(\cdot):\Xi \to \R^n \, \text{s.t.} \, x(\xi)= (x_1(\xi),\, ... \, x_N(\xi))\},$$ 
which is a Hilbert space with the expectational inner product given by
\begin{equation*}
    \langle x(\cdot), w(\cdot)\rangle=\sum\limits_{\xi\in \Xi} p(\xi) \sum\limits_{k=1}^N\langle x_k(\xi),w_k(\xi)\rangle.
\label{eq:piem}
\end{equation*} 

The \textit{nonanticipativity} subspace is defined as
\begin{equation*}
\mathcal{N}= \lbrace x(\cdot)  \in \mathcal{L}_n\mid  x_k(\xi) \mbox{ does not depend on } \xi_k, ..., \xi_N\rbrace.
\end{equation*}
In addition, scenario-dependent constraints can be considered for the decisions, that is, given nonempty closed convex sets $C(\xi)$,  define
\begin{equation*}
    \C= \lbrace x(\cdot) \in \mathcal{L}_n \mid \\  x(\xi) \in C(\xi) \mbox{ for all } \xi \in \Xi \rbrace.
\end{equation*}

Finally, from continuous functions $F(\cdot, \xi): \R^n \to \R^n$, $\xi\in\Xi$,  such that $F(x,\xi)= (F_1(x,\xi),...,F_N(x,\xi))$ with $F_k(x,\xi)\in \R^{n_k}$, it is defined an operator $\mathcal{F}: \mathcal{L}_n \rightarrow \mathcal{L}_n$ such that
\begin{equation*}
\mathcal{F}(x(\cdot)): \xi \mapsto F(x(\xi), \xi)=(F_1(x(\xi),\xi),...,F_N(x(\xi),\xi)).
\end{equation*}
Then, the \textit{Stochastic Variational Inequality in basic form associated to} $F(\cdot, \xi)$ \textit{and} $C(\xi)$ consists in finding $x(\cdot) \in \C \cap \N$ such that $$ -\F(x(\cdot)) \in N_{\C\cap \N}(x(\cdot)).$$ 

A dual structure is introduced by defining the subspace of nonanticipativity multipliers $\M$, the  orthogonal complement of $\N$,  expressed as
\begin{equation*}
\begin{split}
\mathcal{M}=\lbrace w(\cdot)& =(w_1(\cdot),\, ... \, ,w_N(\cdot))\in\mathcal{L}_n\mid \\ & \mathbb{E}_{\xi_k,...,\xi_N}[w_k(\xi_1,...,\xi_{k-1},\xi_k,...,\xi_N)]=0\rbrace,
\end{split}
\end{equation*}
where $\mathbb{E}_{\xi_k,...,\xi_N}$ denotes the conditional expectation knowing the initial components $\xi_1,...,\xi_{k-1}$.
The \textit{SVI in extensive form} is stated as the problem of finding $x(\cdot)\in \N$ and $w(\cdot)\in \M$ such that 
$$ -F(x(\xi),\xi) -w(\xi)\in N_{C(\xi)}(x(\xi)), \quad  \forall \xi \in \Xi.$$
A solution of the SVI in extensive form is a solution of the basic SVI and, under standard constraint qualifications ($ri(\C) \cap ri(\N) \neq \emptyset$, for instance), both problems are equivalent. 

The extensive formulation leads to the exact PHA for SVI introduced in \cite{RS:18}. At each iteration, from points $x_k\in\mathcal{N}$ and $w_k\in\mathcal{M}$, the scheme performs first a proximal step for the SVI in extensive form 
\begin{equation}\label{PHprox}
\begin{split}
 -F(x(\xi),\xi) -w_k(\xi)+ r(x_k(\xi)-x(\xi)) \in N_{C(\xi)} (x(\xi)),\\ 
\end{split}
\end{equation}
and  the new iterates are obtained through projections onto $\N$ and $\M$ as
\begin{equation}\label{PHproy}
    x_{k+1}(\cdot)=P_{\N}(x(\cdot)),\quad w_{k+1}(\cdot)=w_k(\cdot)+rP_{\M}(x(\cdot)).
\end{equation}

Considering the partial inverse of $\F+N_\C$ (see \cite{spi:83}), i.e., the set-valued operator  $T:H\rightrightarrows H$ implicitly given by
\begin{equation*}
    z \in T(y) \Leftrightarrow P_\mathcal{N}(z)+P_\mathcal{M}(y)\in (\mathcal{F}+N_\mathcal{C})(P_\mathcal{N}(y)+P_\mathcal{M}(z)),
\end{equation*}
the PHA turns to be a PPM for the operator $ATA$, where $A$ is the rescaling map $A=P_\mathcal{N}+rP_\mathcal{M}$ (see \cite[Theorem 1]{RS:18}), so the classical convergence theory applies, with linear convergence rate if $C(\xi)$ are polyhedral and the functions $F(\cdot,\xi)$ are affine. (See \cite[Theorem 2]{RS:18}).

Concerning inexact proximal schemes, the HIPPM of \cite{solsva:unif} solves approximately the subproblems \eqref{RMI} by computing a triplet
$(\z^k,\v^k,\varepsilon_k)\in H\times H\times \R_+$ such that
$$
\left\{\begin{array}{l} \v^k\in T^{\varepsilon_k}(\z^k) ,\\
c_k \v^k +\z^k-z_k =\delta^k 
\end{array}\right.\ \ \begin{array}{l}\text{and} \; 
\|\delta^k\|^2+2c_k\varepsilon_k\leq \sigma_k^2\left(\|c_k\v^k\|^2+
\|\z^k-z_k\|^2\right),\end{array}
$$
where $\sigma_k \in [0,1)$ is the error tolerance parameter,
and $T^\varepsilon$ is an outer approximation of $T$ that verifies $T^0=T$ (an $\varepsilon$-enlargement of  $T$, see \cite{bis:97}). 

If $z_k$ is not a solution, the next iterate is obtained by means of a projection onto an appropriate hyperplane, which is given by the explicit formula
$$z_{k+1}=z_k-\tau_k a_k \v^k,\text{ with } \quad a_k=\frac{\langle\v^k,z_k-\z^k \rangle-\varepsilon_k}{\|\v^k\|^2},$$
where $\tau_k\in(0,2)$ is a user choice parameter. If the solution set is not empty, the algorithm converges weakly to a solution (\cite[Theorem 7]{solsva:unif}), with linear convergence if $T^{-1}$ is Lipschitz-continuous at zero (\cite[Theorem 8]{solsva:unif}).

In the finite-dimensional context, an extension to the variable metric setting  was introduced in \cite{pls:08} considering the {\em generalized} proximal subproblems:
\begin{equation}\label{Mexact}
0\in c_k M_k T(z)+ z-z_k,
\end{equation}
given by a sequence $\{M_k\}$ of symmetric positive definite matrices (the framework in \cite{pls:08} is $\R^n$ but the results are valid on any finite-dimensional Hilbert space). If $M_k=I,$ for all $ k$, then the variable metric algorithm  falls within HIPPM, which is the scheme we will apply here. However, we are interested in the convergence rate result of \cite{pls:08}, which in the constant metric context stands as follows.

\begin{theorem}\label{rate:linear} Assume that $T^{-1}(0)\neq \emptyset$ and that $T^{-1}$ is \textit{outer Lipschitz-continuous} at $0$, i.e.,  $T^{-1}(0)$ is a closed set and there are constants $L_1 \geq 0 $ and $L_2\geq 0$ such that
\begin{equation}\label{def:outerLip}
    T^{-1}(v) \subset T^{-1}(0) + L_1 \|v\| B, \quad \forall v \in L_2 B,
\end{equation}
where $B=\{v\in \mathcal{H}\, \mid \, \|v\| \leq 1\}.$
Then the sequence $\{z_k\}$ generated by the HIPPM converges linearly to an element $z^*\in T^{-1}(0)$.
\end{theorem}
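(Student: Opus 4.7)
The plan is to adapt Rockafellar's classical linear convergence argument for the exact PPM to the inexact setting of HIPPM, absorbing the extra errors coming from the $\varepsilon$-enlargement and the proximal residual $\delta^k$ via the relative-error tolerance.

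First, I would call on the general convergence analysis of HIPPM. By (the constant-metric specialization of) the results in \cite{pls:08,solsva:unif}, the sequence $\{z_k\}$ is Fej\'er-monotone with respect to $T^{-1}(0)$ and converges to some $z^*\in T^{-1}(0)$. Concretely, for each $z^\ast\in T^{-1}(0)$ the projection identity together with $\hat v^k\in T^{\varepsilon_k}(\hat z^k)$ and $0\in T(z^\ast)$ yields
\begin{equation*}
\|z_{k+1}-z^\ast\|^2 \le \|z_k-z^\ast\|^2-\tau_k(2-\tau_k)\,a_k^2\|\hat{v}^k\|^2,
\end{equation*}
so $\dist(z_{k+1},T^{-1}(0))\le\dist(z_k,T^{-1}(0))$ and, summing, $a_k\|\hat{v}^k\|\to 0$, which together with the relative-error condition forces $\|\hat{v}^k\|\to 0$ and $\|\hat{z}^k-z_k\|\to 0$.

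The core of the argument is then a distance estimate of the form
\begin{equation*}
\dist(\hat{z}^k,T^{-1}(0))\le L_1\|\hat{v}^k\|+\kappa_k,
\end{equation*}
valid once $\|\hat v^k\|\le L_2$. The delicate point is that $\hat{v}^k\in T^{\varepsilon_k}(\hat{z}^k)$, not $T(\hat{z}^k)$, so \eqref{def:outerLip} cannot be applied verbatim. Using the transportation formula for $\varepsilon$-enlargements from \cite{bis:97}, one locates a nearby $\tilde z^k$ with $\hat{v}^k\in T(\tilde z^k)$ and $\|\tilde z^k-\hat z^k\|$ of order $\sqrt{\varepsilon_k}$; applying \eqref{def:outerLip} at $\tilde z^k$ and using the triangle inequality gives the displayed bound, with $\kappa_k$ controlled by $\sigma_k(\|c_k\hat v^k\|+\|\hat z^k-z_k\|)$ through the tolerance $2c_k\varepsilon_k\le \sigma_k^2(\|c_k\hat v^k\|^2+\|\hat z^k-z_k\|^2)$.

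Finally, I would combine this bound with the proximal identity $c_k\hat v^k+\hat z^k-z_k=\delta^k$ and the update $z_{k+1}=z_k-\tau_k a_k\hat v^k$ to derive
\begin{equation*}
\dist(z_{k+1},T^{-1}(0))^2\le \rho\,\dist(z_k,T^{-1}(0))^2
\end{equation*}
for some $\rho<1$, uniformly for $k$ large (assuming $c_k$ bounded below and $\sigma_k\le\bar\sigma<1$). Linear decay of distances, combined with the Fej\'er-type bound $\|z_{k+1}-z_k\|\le C\,\dist(z_k,T^{-1}(0))$, shows that $\{z_k\}$ has a geometric tail and is therefore Cauchy, so it converges linearly to its limit $z^\ast$. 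I expect the main obstacle to be the enlargement handling: matching the order $\sqrt{\varepsilon_k}$ produced by the transportation step against the contraction constant so that $\rho<1$ is achieved uniformly, which is precisely the reason the relative-error condition of \cite{solsva:unif} is stated in its scaled quadratic form.
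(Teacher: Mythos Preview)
Your proposal is a reasonable sketch of how one would actually \emph{prove} the cited result, but the paper does not do this at all: its entire proof of Theorem~\ref{rate:linear} is the single sentence ``Apply \cite[Theorem~4.4]{pls:08} with $M_k=I,\ \forall k$.'' In other words, the theorem is stated only as the constant-metric specialization of an already-published result, and the paper delegates every substantive step to \cite{pls:08}. What you wrote is essentially an outline of the proof that lives inside that reference, so you are taking a genuinely different (and much more explicit) route.

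Your outline is largely sound, with one small slip. The Br{\o}nsted--Rockafellar--type step you invoke does not give $\hat v^k\in T(\tilde z^k)$; it produces a pair $(\tilde z^k,\tilde v^k)\in\operatorname{graph}T$ with both $\|\tilde z^k-\hat z^k\|$ and $\|\tilde v^k-\hat v^k\|$ of order $\sqrt{\varepsilon_k}$ (take $\tilde z^k=(I+\eta T)^{-1}(\hat z^k+\eta\hat v^k)$ and use the defining inequality of $T^{\varepsilon_k}$). You then apply \eqref{def:outerLip} at $\tilde v^k$, not at $\hat v^k$, picking up an extra $L_1\sqrt{\varepsilon_k}$ that is absorbed exactly as you describe via the relative-error bound $2c_k\varepsilon_k\le\sigma_k^2(\|c_k\hat v^k\|^2+\|\hat z^k-z_k\|^2)$. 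Once that is corrected, the rest of the argument---Fej\'er decrease, the distance inequality $\dist(z_{k+1},T^{-1}(0))\le\rho\,\dist(z_k,T^{-1}(0))$ for large $k$, and the geometric-tail conclusion---goes through. Compared with the paper's approach, yours is self-contained and shows where the outer-Lipschitz hypothesis is actually used; the paper's buys brevity and avoids re-deriving the variable-metric machinery of \cite{pls:08}.
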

\begin{proof}
Apply \cite[Theorem 4.4]{pls:08} with $M_k=I, \forall k$.
\end{proof}
Note that condition \eqref{def:outerLip} does not force the solution set to be a singleton. 
In the next section, some advantages of considering this weaker condition (instead of Lipschitz continuity) are shown through simple examples.

\section{Inexact Progressive Hedging for SVIs}\label{sec:IPHA}

As we mentioned before, the main difficulty when applying the PHA lies in the exact resolution of the generated subproblems. To sort out this obstacle, we present an inexact version of the PHA. Let us consider monotone continuous functions $F(\cdot,\xi)$, closed convex sets $C(\xi)$ for each $\xi\in\Xi$ and  $r$ a positive parameter. For simplicity, we shall denote a function $x(\cdot)\in \mathcal{L}_n$ as $x$.

\bigskip

 \begin{algorithm}[H]
 \caption{Inexact Progressive Hedging Algorithm}\label{IPHA}
\begin{algorithmic}[1]
\item \textit{Inicialization:} Choose $x_0\in \mathcal{N}$, $w_0\in \mathcal{M}, \  \bar{\sigma} \in (0,1), \theta \in (0,1).$

\item \textit{Inexact Proximal Step:} Choose $\sigma_k\in [0,\bar{\sigma}).$ Find $\hat x^k$, $\hat w^k$ such that
\begin{equation}\label{ips} \left\lbrace \begin{array}{lll}
r(x_k(\xi)-\x^k(\xi))-w_k(\xi) \in (F+N_{C(\xi)}) (\w^k(\xi)), \quad \xi \in \Xi,\\ 
\delta^k = \w^k-\x^k\\
\end{array}
\right.\end{equation}and
\begin{equation}
\|\delta^k\| ^2 \leq  \sigma_k^2 \big( \|x_k-P_\N(\x^k)+ P_\M(\w^k)\|^2 + \|x_k-P_\N(\w^k)+P_\M (\x^k)\|^2\big).
\label{eq:conddelta}
\end{equation}

\item \textit{Actualization:} If $x_k=P_\N(\w^k) -P_\M (\x^k)$ STOP. Otherwise, choose  $\tau_k\in [1-\theta,1+\theta]$ and set
 \begin{equation*} 
 \begin{array}{lcl}
 x_{k+1}&=&x_k-\tau_k \alpha_k [x_k-P_\N(\x^k) ],
\\
 w_{k+1}&=&w_k+\tau_k \alpha_k rP_\M(\w^k), 
 \end{array}
 \end{equation*}
where 
\begin{equation*}
    \alpha_k= \frac{\langle x_k-P_\N(\x^k)+P_\M(\w^k),x_k-P_\N(\w^k)+P_\M(\x^k)\rangle}{\|x_k-P_\N(\x^k)+P_\M(\w^k)\|^2}.
\end{equation*}
Set $k:=k+1$ and go to the Inexact Proximal Step.
\end{algorithmic}
\end{algorithm}

Like the PHA, this algorithm looks for an approximate solution of the SVI in extensive form, depending on the initial point. Observe that if the stop condition is satisfied, then the inequality \eqref{eq:conddelta} becomes 
$$ \|\delta^k\|^2 \leq \sigma_k^2\|x_k - P_\N (\x^k) + P_\M (\w^k)\|^2.$$ 
Since $x_k\in \N$, the stop condition also implies $P_\M(\x^k)=0$ and $x_k=P_\N(\w^k)$, so $x_k - P_\N (\x^k) + P_\M (\w^k) = \w^k-\x^k=\delta^k$ and in consequence
$\delta^k=0$. Then, $\w^k=\x^k\in \N$ and 
$$x_k-\x^k = P_\N(\w^k)- \w^k=0.$$
Thus, the pair  $(x_k,w_k)$ solves the SVI in extensive form, and therefore, $x_k$ is a solution of the basic SVI. Analogous reasoning holds if we take as a stop condition the equality $x_k=P_\N (\x^k) + P_\M (\w^k)$.

Also, note that if each subproblem \eqref{ips} is solved exactly, i.e., if $\delta^k\equiv 0, \forall k$, then $\w^k=\x^k$ and the IPHA falls within the PHA provided that $\tau_k\alpha_k=1$. The following proposition says that it is possible to choose $\tau_k$ to achieve this equality.

\begin{proposition}

If the inequality in \eqref{eq:conddelta} is strengthened to 
\begin{equation}\label{prop:stronger}
    \|\delta^k\|\leq
\sigma_k \|x_k-P_\N(\w^k)+P_\M (\x^k)\|
\end{equation} and we choose
$\sigma_k\leq\theta$, then there exists
$\tau_k\in(1-\sigma_k,1+\sigma_k)$ such that $\tau_k \alpha_k=1$.

\end{proposition}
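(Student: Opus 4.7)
The plan is to take $\tau_k := 1/\alpha_k$ (the unique choice making $\tau_k\alpha_k=1$) and verify that this value sits inside the stated interval. First I would introduce the shorthands $u := x_k - P_\N(\x^k) + P_\M(\w^k)$ and $v := x_k - P_\N(\w^k) + P_\M(\x^k)$, so that $\alpha_k = \langle u, v\rangle/\|u\|^2$. The essential observation, obtained from $P_\N + P_\M = I$ on $\mathcal{L}_n$, is the identity $u - v = \w^k - \x^k = \delta^k$, which turns the hypothesis \eqref{prop:stronger} into the purely two-vector bound $\|u - v\| \leq \sigma_k\|v\|$. From this point on the whole argument takes place in the plane spanned by $v$ and $\delta^k$.

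Next I would check that $\alpha_k$ is well defined and positive, so that $\tau_k$ is legitimate. Assuming the algorithm has not stopped, $v \neq 0$; expanding $\langle u, v\rangle = \|v\|^2 + \langle \delta^k, v\rangle$ and applying Cauchy--Schwarz together with \eqref{prop:stronger} yields $\langle u, v\rangle \geq (1-\sigma_k)\|v\|^2 > 0$, using $\sigma_k \leq \theta < 1$.

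The core step is the estimate $|\tau_k - 1| \leq \sigma_k$. I would rewrite
\[ \tau_k - 1 = \frac{\langle u, u - v\rangle}{\langle u, v\rangle} = \frac{\langle v, \delta^k\rangle + \|\delta^k\|^2}{\|v\|^2 + \langle v, \delta^k\rangle}, \]
and normalize via $r := \langle v, \delta^k\rangle/\|v\|^2$ and $s := \|\delta^k\|^2/\|v\|^2$. The hypothesis becomes $s \leq \sigma_k^2$, while Cauchy--Schwarz gives $r^2 \leq s$. The task then reduces to the scalar inequality $|(r+s)/(1+r)| \leq \sigma_k$ under these constraints, which resolves by a short sign analysis on $r+s$ using $1+r \geq 1-\sigma_k > 0$.

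The main subtlety I expect to deal with is the openness of the interval: equality in the scalar bound is attained only at the collinear extremal configurations $\delta^k = \pm\sigma_k v$, where both $r^2 = s$ and $s = \sigma_k^2$ hold simultaneously. Thus strict inequality, and hence $\tau_k \in (1-\sigma_k, 1+\sigma_k)$, follows whenever one of these two saturations fails, which is the generic situation. The final inclusion $\tau_k \in [1-\theta, 1+\theta]$ required by the algorithm is then automatic from $\sigma_k \leq \theta$.
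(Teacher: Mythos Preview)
Your approach is correct and genuinely different from the paper's. After the common reduction $u-v=\delta^k$, the paper bounds $\alpha_k$ directly: it uses the triangle inequality on $\|u\|$ and $\|v\|$ to get $(1-\sigma_k)\|v\|/\|u\|\le 1\le(1+\sigma_k)\|v\|/\|u\|$, bounds $\alpha_k\le\|v\|/\|u\|$ by Cauchy--Schwarz, and bounds $\alpha_k\ge 1/(1+\sigma_k)$ via the polarization identity $2\langle u,v\rangle=\|u\|^2+\|v\|^2-\|\delta^k\|^2$ together with \eqref{prop:stronger}. This yields $(1-\sigma_k)\alpha_k\le 1\le(1+\sigma_k)\alpha_k$ in three short strokes. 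Your route instead computes $\tau_k-1$ explicitly as $(r+s)/(1+r)$ and reduces everything to a one-variable inequality under the constraints $r^2\le s\le\sigma_k^2$; this is more hands-on but has the advantage of making the equality cases $\delta^k=\pm\sigma_k v$ visible, which the paper's argument does not. Both arguments in fact only deliver the \emph{closed} interval $[1-\sigma_k,1+\sigma_k]$; the openness stated in the proposition fails exactly at those collinear extremals and is not established by the paper either, so your caveat there is well placed. One small point you should make explicit: well-definedness of $\alpha_k$ needs $u\neq 0$, which follows from $v\neq 0$ and $\|u-v\|\le\sigma_k\|v\|$ with $\sigma_k<1$.
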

\begin{proof}
Assume that the pair $(x_k, w_k)$ is not a solution, so $x_k-P_\N(\w^k)+P_\M (\x^k)\neq 0$ and  $x_k-P_\N(\x^k)+P_\M (\w^k)\neq 0$. From \eqref{prop:stronger}, the definition of $\delta^k$ and the triangle inequality, we obtain
$$\|x_k-P_\N(\w^k)+P_\M (\x^k)\|- \|x_k-P_\N(\x^k)+P_\M (\w^k)\|\leq \sigma_k \|x_k-P_\N(\w^k)+P_\M (\x^k)\|$$and
$$   \|x_k-P_\N(\x^k)+P_\M (\w^k)\|- \|x_k-P_\N(\w^k)+P_\M (\x^k)\|\leq \sigma_k \|x_k-P_\N(\w^k)+P_\M (\x^k)\|,$$
so
\begin{equation}\label{prop:desig1}
(1-\sigma_k)\frac{ \|x_k-P_\N(\w^k)+P_\M (\x^k)\|}{ \|x_k-P_\N(\x^k)+P_\M (\w^k)\|}\leq 1\leq (1+\sigma_k)\frac{ \|x_k-P_\N(\w^k)+P_\M (\x^k)\|}{ \|x_k-P_\N(\x^k)+P_\M (\w^k)\|}
\end{equation}
From the definition of $\alpha_k$ and the Cauchy-Schwarz inequality, it holds that
\begin{equation}
\alpha_k\leq \frac{ \|x_k-P_\N(\w^k)+P_\M (\x^k)\|}{ \|x_k-P_\N(\x^k)+P_\M (\w^k)\|}.\label{prop:desig2}
\end{equation}
Also, using \eqref{prop:stronger} and \eqref{prop:desig1} we obtain
\begin{eqnarray}
\alpha_k&=&\frac{\|x_k-P_\N(\x^k)+P_\M (\w^k)\|^2+\|x_k-P_\N(\w^k)+P_\M (\x^k)\|^2-\|\delta^k\|^2}{2\|x_k-P_\N(\x^k)+P_\M (\w^k)\|^2}\nonumber\\
&\geq&\frac{1}{2}+\frac{(1-\sigma_k^2)\|x_k-P_\N(\w^k)+P_\M (\x^k)\|^2}{2\|x_k-P_\N(\x^k)+P_\M (\w^k)\|^2}\nonumber\\&\geq&\frac{1}{2}+\frac{1-\sigma_k^2}{2(1+\sigma_k)^2}=\frac{1}{1+\sigma_k}.\label{prop:desig3}
\end{eqnarray}
Finally, inequalities \eqref{prop:desig1},  \eqref{prop:desig2} and  \eqref{prop:desig3} 
 give
$$(1-\sigma_k)\alpha_k\leq 1 \leq (1+\sigma_k)\alpha_k,$$
which proves the claim.
\end{proof}

The former proposition establishes that if we can solve the subproblems in Algorithm \ref{IPHA} satisfying \eqref{prop:stronger} (which is obviously the case when they are solved exactly), then the new iterates can be obtained by the simple formula
$$ \begin{array}{lcl}
 x_{k+1}&=&P_\N(\x^k),\\
 w_{k+1}&=&w_k+ rP_\M(\w^k),
\end{array}$$
which is the same actualization formula of the PHA of \cite{RS:18}, so IPHA actually extends that scheme.

Assuming that the stop condition never holds, we obtain a sequence that, we shall prove, converges to a solution. With this goal in mind, we shall proceed in the manner of Rockafellar and Sun, by using the HIPPM of \cite{solsva:unif}.

\begin{theorem}
The iterations given by the IPHA  are equivalent to those obtained by the HIPPM from \cite{solsva:unif} for the mapping $ ATA $ by setting $\varepsilon_k=0$ and $c_k= r^{-1}$ for all $k$, where
\begin{equation*}
    z \in T(y) \Longleftrightarrow P_\mathcal{N}(z)+P_\mathcal{M}(y)\in (\mathcal{F}+N_\mathcal{C})(P_\mathcal{N}(y)+P_\mathcal{M}(z))
\end{equation*}
and $A:\mathcal{L}_n \to \mathcal{L}_n$ a symmetric positive-definite and invertible linear mapping given by
\begin{equation*}
A (u)= P_\mathcal{N}(u)+rP_\mathcal{M}(u).
\end{equation*}
\label{theo:alg.equiv}
\end{theorem}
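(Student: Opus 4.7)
The plan is to set up an iteration-by-iteration bijection between the HIPPM applied to $ATA$ (with $c_k = r^{-1}$ and $\varepsilon_k = 0$) and the IPHA state $(x_k, w_k)$, and then verify that the inexact proximal subproblem, the tolerance inequality, and the update rule of each algorithm translate into those of the other. The key identification is $z_k := x_k - r^{-1} w_k$, together with the HIPPM auxiliary quantities
\[
\z^k := P_\N(\w^k) - P_\M(\x^k) - r^{-1}w_k, \qquad \v^k := r\,\big(x_k - P_\N(\x^k) + P_\M(\w^k)\big).
\]
These are not arbitrary: they are forced by requiring that the partial-inverse unfolding of the HIPPM inclusion $\v^k \in (ATA)(\z^k)$ reproduce the IPHA inclusion \eqref{ips} term by term.

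Before any calculation, I would note that $ATA$ is maximal monotone on $\L_n$ (since $\F + N_\C$ is maximal monotone under the standing hypotheses, the partial inverse preserves maximal monotonicity, and $A$ is a symmetric positive-definite linear bijection), so the HIPPM is applicable. I would then write $\v^k = A\hat t^k$ with $\hat t^k \in T(A\z^k)$, compute the $\N$- and $\M$-projections of $A\z^k$ and $\hat t^k$ from the definitions above, and check that the partial-inverse inclusion $P_\N(\hat t^k) + P_\M(A\z^k) \in (\F + N_\C)(P_\N(A\z^k) + P_\M(\hat t^k))$ collapses after cancellations to $r(x_k - \x^k) - w_k \in (\F + N_\C)(\w^k)$, i.e.\ exactly \eqref{ips}. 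A direct computation then shows the HIPPM residual $c_k\v^k + \z^k - z_k$ simplifies to $\w^k - \x^k = \delta^k$, so the choice $\varepsilon_k = 0$ is consistent; and recognizing $\|c_k \v^k\| = \|x_k - P_\N(\x^k) + P_\M(\w^k)\|$ and $\|\z^k - z_k\| = \|x_k - P_\N(\w^k) + P_\M(\x^k)\|$, the HIPPM tolerance inequality reads precisely as condition \eqref{eq:conddelta}.

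For the actualization, the factor of $r$ in $\v^k$ and its absence in $z_k - \z^k$ combine so that $a_k = \langle \v^k, z_k - \z^k\rangle/\|\v^k\|^2 = r^{-1}\alpha_k$, and hence $\tau_k a_k \v^k = \tau_k\alpha_k\big(x_k - P_\N(\x^k) + P_\M(\w^k)\big)$. Projecting the HIPPM update $z_{k+1} = z_k - \tau_k a_k \v^k$ onto $\N$ recovers the IPHA formula for $x_{k+1}$, while projecting onto $\M$ and multiplying by $-r$ recovers the formula for $w_{k+1}$, verifying the identity $z_{k+1} = x_{k+1} - r^{-1}w_{k+1}$ and closing the induction. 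The reverse implication follows from the bijectivity of the correspondence.

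The main obstacle I expect is pinning down the correct sign in the identification $z_k = x_k - r^{-1}w_k$. The more natural-looking guess $z_k = x_k + r^{-1}w_k$ produces a sign mismatch between the PPM inclusion $r(z_k - z_{k+1}) \in ATA(z_{k+1})$ and the sign conventions built into the definition of the partial inverse $T$, making the multiplier update come out with the wrong sign; only the minus sign makes the PPM and PHA inclusions compatible after projection onto $\N$ and $\M$. Past this sign check, all remaining verifications are systematic orthogonal-decomposition algebra in $\L_n = \N \oplus \M$.
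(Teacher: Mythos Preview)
Your proposal is correct and follows essentially the same route as the paper: the same identification $z_k = x_k - r^{-1}w_k$, the same formulas $\v^k = r\big(x_k - P_\N(\x^k) + P_\M(\w^k)\big)$ and $z_k - \z^k = x_k - P_\N(\w^k) + P_\M(\x^k)$, the same unfolding of the partial-inverse definition of $T$, and the same identification $a_k = r^{-1}\alpha_k$ for the update. The only cosmetic difference is the direction of the derivation: the paper begins with an abstract HIPPM step and \emph{defines} $\w^k$ and $\x^k$ from $\z^k,\v^k,\delta^k$, whereas you start from the IPHA data $(\x^k,\w^k)$ and \emph{define} $\z^k,\v^k$, then verify the HIPPM conditions; since the correspondence is a linear bijection, both directions are equivalent.
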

\begin{proof}
Let $x_k\in\N$ and $w_k\in\M$ be the current iterates of the IPHA, and define $z_k=x_k-r^{-1}w_k$.
Then $P_\N(z_k)=x_k$ and $P_\M(z_k)=-r^{-1}w_k$. Observe that if $0\in ATA(z_k)$ then  $0\in T(x_k-w_k)$ and the definition of $T$ implies that $(x_k,w_k)$ solves the SVI in extensive form, then the problem reduces to look for a zero of $ATA$.
Consider, from $z_k$, an iteration of the HIPPM for $ATA$ setting $\varepsilon_k=0$ and $c_k=r^{-1}.$ That is, choose $\sigma_k\in [0,\overline{\sigma}) $ and find  $\v^k,\z^k$ such that 
\begin{equation}\label{IPS1} \left\lbrace \begin{array}{lll}
\v^k \in ATA(\z^k)\\ 
\delta^k = r^{-1}\v^k +\z^k-z_k, \\
\end{array}
\right.\end{equation}
verifying
\begin{equation}
\|\delta^k\| ^2 \leq  \sigma_k^2  \big( \|r^{-1}\v^k\|^2 + \|\z^k- z_k\|^2\big).
 \label{eq:conddelprox}
\end{equation}
The algorithm stops if $z_k-\z^k=0$ (in which case $z_k$ is a zero of $ATA$ and thus $(x_k,w_k)$ solves the SVI), otherwise, select $\tau_k \in [1-\theta,1+\theta]$ and define 
\begin{equation}z_{k+1}= z_k-\tau_ka_k \v^k,\label{update1}
\end{equation}
where  
$$a_k= \frac{\langle \v^k,z_k-\z^k\rangle }{\|\v^k\|^2}. $$ 
From \eqref{IPS1}, we have 
\begin{equation*}
    r(\delta^k +z_k-\z^k)\in ATA(\z^k). 
\end{equation*}
Set $u_k=A(z_k)$ y $ \u^k=A(\z^k).$ Note that 
$$x_k=P_\N(z_k)=P_\N(u_k) \quad y \quad w_k=-rP_\M(z_k)=-P_\M(u_k).$$ In this setting, the inclusion in \eqref{IPS1} can be written as
$$    rA^{-1}\delta^k + rA^{-2}(u_k-\u^k) \in T(\u^k),
$$
which, by the definition of $T$ and noting that $A^{-1}=P_\N+r^{-1}P_\M$, gives
\begin{equation}\label{IPS2}
    r P_\N(\delta^k) + rP_\N(u_k-\u^k) +P_\M(\u^k) \in (\F+N_\C)(P_\N(\u^k)+P_\M(\delta^k) + r^{-1} P_\M(u_k-\u^k)).
\end{equation}

Define $\w^k=P_\N(\u^k)+P_\M(\delta^k) + r^{-1}P_\M(u_k-\u^k)$. Hence,
\begin{eqnarray}\label{whatN}P_\N(\w^k)&=&P_\N(\u^k)=P_\N(\z^k),\\
\label{whatM}P_\M(\w^k)&=&P_\M(\delta^k) + r^{-1}P_\M(u_k-\u^k)=P_\M(\delta^k+z_k-\z^k),\end{eqnarray} 
and \begin{equation}\label{uhatM}P_\M(\u^k)=rP_\M(\delta^k-\w^k)-w_k.\end{equation}
Thereby, using these replacements in inclusion \eqref{IPS2}, it results
\begin{equation*}
    rP_\N(\delta^k) +rx_k-rP_\N(\w^k)+rP_\M(\delta^k)-rP_\M(\w^k)-w_k\in (\F+N_\C)(\w^k),
\end{equation*}
that is, 
\begin{equation*}
    r(x_k+\delta^k -\w^k)-w_k \in (\F+N_\C)(\w^k).
\end{equation*}
From the above inclusion, defining $\x^k=\w^k-\delta^k$, we obtain \eqref{ips}. Also, \eqref{whatN} and \eqref{whatM} give

\begin{eqnarray}
    \v^k&=&r(\delta^k+z_k-\z^k)=r\big[P_\N(z_k-(\z^k-\delta^k))+P_\M(\delta^k+z_k-\z^k)\big]\nonumber \\
     &=&r\big[x_k-P_\N(\x^k)+P_\M(\w^k))\big]\label{vhat1}
\end{eqnarray}
and 
\begin{equation}
    z_k-\z^k=P_\N(z_k-\z^k)+P_\M(z_k-\z^k)=x_k-P_\N(\w^k)+P_\M(\x^k)\label{zkzhat}
\end{equation}
so \eqref{eq:conddelprox} takes the form
\begin{equation*}
\begin{split}
    \|\delta^k\|^2 &\leq \sigma_k^2\big(\|r^{-1}\v^k\|^2 +\|z_k-\z^k\|^2\big)\\
    &=\sigma_k^2\big(\|x_k-P_\N(\x^k)+P_\M(\w^k))\|^2 +\|x_k-P_\N(\w^k)+P_\M(\x^k)\|^2\big),
\end{split}
\end{equation*}
which is just condition \eqref{eq:conddelta}. In addition, \eqref{zkzhat} implies that the stopping criterion $z_k-\z^k=0$ is equivalent to the one in IPHA, i.e.,  
$x_k-P_\N(\w^k)+P_\M(\x^k)=0$. If it is not satisfied, the new iterate $z_{k+1}$ given by \eqref{update1} can be written as $z_{k+1}=x_{k+1}-r^{-1}w_{k+1}$ so
\begin{equation*}
    \begin{split}
        x_{k+1}&=P_\N(z_{k+1})=P_\N(z_k-\tau_k a_kr(\delta^k+z_k-\z^k)=x_k-\tau_k a_k r(x_k-P_\N(\x^k))\\
        w_{k+1}&=-rP_\M(z_{k+1})=-rP_\M(z_k-\tau_k a_kr(\delta^k+z_k-\z^k))=w_k+\tau_k a_kr^2 P_\M(\w^k).
    \end{split}
\end{equation*}

From \eqref{vhat1} and \eqref{zkzhat}, $a_k$ can be expressed as
\begin{equation*}
    a_k= \frac{\langle x_k-P_\N(\x^k)+P_\M(\w^k),x_k-P_\N(\w^k)+P_\M(\x^k)\rangle}{r\|x_k-P_\N(\x^k)+P_\M(\w^k)\|^2}
\end{equation*}
so defining $\alpha_k=ra_k$, we achieve the same updates of IPHA.  
 
\end{proof}

Theorem \ref{theo:alg.equiv} allows us to apply the theory from \cite{solsva:unif} and \cite{pls:08} to establish the convergence properties of algorithm \ref{IPHA}. For this purpose, it is convenient to introduce an equivalent norm in $\L_n$ that properly associates the iterates $(x_k,w_k)$ of IPHA with the iterates $z_k$ of HIPPM. Specifically, we use the norm induced by the symmetric positive definite matrix $M_r= \left(\begin{array}{cc}
    I_\N &0  \\
    0 &r^{-2}I_\M 
\end{array}\right)$, i.e., the norm $\|\cdot\|_{M_r} $ given by
$$\|(x,w)\|^2_{M_r}=\langle (x,w),M_r(x,w)\rangle=\langle (x,w),(x,r^{-2}w)\rangle=\|x\|^2+r^{-2}\|w\|^2.$$

\begin{corollary}
If the SVI in extensive form has at least one solution, then the sequence $\lbrace( x_k,w_k)\rbrace$ generated by the IPHA converges to $(x^*,w^*)$, a solution of the SVI in extensive form, being $x^*$ a solution of the basic SVI. 
\label{coro:iphaconver}
\end{corollary}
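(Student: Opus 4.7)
The plan is to leverage Theorem \ref{theo:alg.equiv}: since the IPHA iterates coincide with those produced by HIPPM applied to $ATA$ (with $c_k = r^{-1}$ and $\varepsilon_k = 0$), I would invoke the convergence theorem of HIPPM (\cite[Theorem 7]{solsva:unif}) for the sequence $\{z_k\}$ and then transport the resulting convergence back to $\{(x_k,w_k)\}$ through the identification $z_k = x_k - r^{-1}w_k$, i.e., $x_k = P_\N(z_k)$ and $w_k = -r P_\M(z_k)$.

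To apply \cite[Theorem 7]{solsva:unif}, two hypotheses must be checked. First, $ATA$ should be maximal monotone: monotonicity and continuity of each $F(\cdot,\xi)$ together with the full domain of $\F$ imply that $\F + N_\C$ is maximal monotone on $\L_n$; its partial inverse $T$ with respect to the orthogonal decomposition $\L_n = \N \oplus \M$ inherits maximal monotonicity by the classical result of Spingarn \cite{spi:83}; and conjugation by the symmetric positive definite invertible linear map $A$ preserves maximal monotonicity. Second, $(ATA)^{-1}(0) \neq \emptyset$: given a solution $(x^*,w^*)$ of the SVI in extensive form, I would set $z^* = x^* - r^{-1}w^*$, so that $A(z^*) = x^* - w^*$, $P_\N(A(z^*)) = x^*$ and $P_\M(A(z^*)) = -w^*$; the defining relation of $T$ then reduces $0 \in T(A(z^*))$ to $-w^* \in (\F + N_\C)(x^*)$, which is precisely the SVI in extensive form, and invertibility of $A$ yields $z^* \in (ATA)^{-1}(0)$.

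With these hypotheses secured, \cite[Theorem 7]{solsva:unif} guarantees that $\{z_k\}$ converges (weakly, hence strongly by finite-dimensionality of $\L_n$) to some $z^* \in (ATA)^{-1}(0)$. Continuity of $P_\N$ and $P_\M$ then delivers $x_k \to x^* := P_\N(z^*) \in \N$ and $w_k \to w^* := -r P_\M(z^*) \in \M$, and the zero-set correspondence established above shows that $(x^*,w^*)$ solves the SVI in extensive form and therefore $x^*$ solves the basic SVI. The equivalent norm $\|\cdot\|_{M_r}$ introduced before the corollary makes the map $(x,w)\mapsto x - r^{-1}w$ an isometry onto $\L_n$ endowed with its Euclidean norm, which is what makes the transport of convergence between the two representations seamless. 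I expect the delicate step to be the maximal monotonicity of $ATA$ and the clean two-sided matching between zeros of $ATA$ and solutions of the SVI; once those are in place, the convergence statement follows by direct invocation of the cited HIPPM theory.
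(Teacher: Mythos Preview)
Your proposal is correct and follows exactly the paper's approach: the paper's one-line proof simply notes that the existence of a solution to the extensive SVI is equivalent to $(ATA)^{-1}(0)\neq\emptyset$ and then invokes the HIPPM convergence theorem from \cite{solsva:unif}, and you have merely filled in the details (maximal monotonicity of $ATA$, the zero-set correspondence, and the transport of convergence via $z_k=x_k-r^{-1}w_k$) that the paper leaves implicit. A minor remark: the paper's proof here cites Theorem~6 of \cite{solsva:unif} while the Preliminaries cite Theorem~7 for the same weak-convergence result, so your reference to Theorem~7 actually matches the rest of the paper.
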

\begin{proof}
Note that the existence of a solution of SVI in extensive form  is equivalent to $(ATA)^{-1}(0) \neq \emptyset$ and apply (\cite{solsva:unif}, Theorem 6).
\end{proof}

\begin{corollary}\label{coro:Lip1}

If  $(ATA)^{-1}$ is Lipschitz continuous at zero, then the convergence is linear in the norm induced by $M_r$.
\end{corollary}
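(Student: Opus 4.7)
The plan is to reduce the claim to Theorem \ref{rate:linear} applied to the operator $ATA$ (with $M_k = I$ in the sense of the excerpt), and then translate the resulting linear convergence for the HIPPM iterates $\{z_k\}$ into linear convergence for the IPHA iterates $\{(x_k,w_k)\}$ in the $M_r$-norm via the correspondence established in Theorem \ref{theo:alg.equiv}.

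First, I would verify that the hypothesis of Theorem \ref{rate:linear} is met. By Corollary \ref{coro:iphaconver}, the existence of a solution of the SVI in extensive form gives $(ATA)^{-1}(0) \neq \emptyset$; and Lipschitz continuity of $(ATA)^{-1}$ at zero is strictly stronger than the outer Lipschitz condition \eqref{def:outerLip}, so it implies it (in particular, $(ATA)^{-1}(0)$ is closed as the limit set of a Lipschitzian set-valued map). Applying Theorem \ref{rate:linear} to $ATA$ yields that the HIPPM sequence $\{z_k\}$ obtained with $\varepsilon_k = 0$ and $c_k = r^{-1}$ converges linearly in the standard norm of $\L_n$ to some $z^* \in (ATA)^{-1}(0)$.

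Next, I would transfer this rate to $(x_k,w_k)$. From the proof of Theorem \ref{theo:alg.equiv}, the IPHA iterates satisfy $z_k = x_k - r^{-1} w_k$ with $P_\N(z_k) = x_k$ and $P_\M(z_k) = -r^{-1} w_k$. Setting $x^* = P_\N(z^*)$ and $w^* = -r P_\M(z^*)$, the definition of $T$ ensures that $(x^*, w^*)$ solves the SVI in extensive form. A direct computation using the orthogonality of $\N$ and $\M$ gives
\begin{equation*}
\|z_k - z^*\|^2 = \|P_\N(z_k - z^*)\|^2 + \|P_\M(z_k - z^*)\|^2 = \|x_k - x^*\|^2 + r^{-2}\|w_k - w^*\|^2 = \|(x_k - x^*, w_k - w^*)\|_{M_r}^2,
\end{equation*}
so the norm $\|\cdot\|_{M_r}$ is precisely the pull-back of the ambient norm on $\L_n$ under the map $(x,w)\mapsto x - r^{-1}w$. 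Consequently, linear convergence of $\{z_k\}$ is equivalent to linear convergence of $\{(x_k,w_k)\}$ in the $M_r$-norm, which is the desired conclusion.

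No step here seems delicate: the only mildly tricky point is being explicit that Lipschitz continuity at zero implies the outer Lipschitz condition \eqref{def:outerLip} required by Theorem \ref{rate:linear} (this is immediate once one spells out the definition, since Lipschitz continuity is a pointwise-in-image inclusion of the same form with a neighborhood of zero as domain). Everything else is just bookkeeping through the isometry between $(\L_n, \|\cdot\|)$ and $(\N \times \M, \|\cdot\|_{M_r})$ induced by the rescaling $A$.
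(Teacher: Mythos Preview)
Your proof is correct. The paper's own proof is a one-liner: it directly invokes \cite[Theorem~8]{solsva:unif}, which gives linear convergence of HIPPM under the Lipschitz-at-zero hypothesis, and leaves the $M_r$-norm translation implicit from the discussion preceding the corollaries.

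Your route differs slightly: instead of citing the Lipschitz result from \cite{solsva:unif}, you observe that Lipschitz continuity at zero implies outer Lipschitz continuity and then invoke Theorem~\ref{rate:linear} (the \cite{pls:08} result). This is a minor detour---essentially you are proving Corollary~\ref{coro:Lip1} as a special case of Corollary~\ref{coro:outerLip}, whose proof in the paper carries out exactly the norm computation you spell out. What your version buys is self-containment (you only use results stated in the present paper) and an explicit isometry argument; what the paper's version buys is directness, matching the hypothesis to the theorem tailored for it. Neither approach has a gap.
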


\begin{proof} Apply \cite[Theorem  8]{solsva:unif}.
\end{proof}

\begin{corollary}
In addition to the assumptions of Corollary  \ref{coro:iphaconver}, if $(ATA)^{-1}$ is outer Lipschitz continuous at 0 then the sequence $\{(x_k,w_k)\}$ converges linearly to a solution $(x^*,w^*)$ in the norm induced by $M_r$. 
\label{coro:outerLip}
\end{corollary}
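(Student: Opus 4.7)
The plan is to combine Theorem \ref{theo:alg.equiv}, which shows that IPHA is exactly a specialization of HIPPM applied to $ATA$ (with $\varepsilon_k\equiv 0$, $c_k\equiv r^{-1}$), with Theorem \ref{rate:linear} on the linear convergence rate of HIPPM. The assumption that SVI in extensive form admits a solution is equivalent to $(ATA)^{-1}(0)\neq\emptyset$, so together with the outer Lipschitz continuity of $(ATA)^{-1}$ at $0$, the hypotheses of Theorem \ref{rate:linear} (applied to $T\leftarrow ATA$) are verified. Hence the associated HIPPM sequence $\{z_k\}$, with $z_k = x_k - r^{-1}w_k$ as constructed in the proof of Theorem \ref{theo:alg.equiv}, converges linearly in the ambient norm of $\L_n$ to some $z^*\in(ATA)^{-1}(0)$.

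Next I would translate this convergence rate back into the variables $(x_k,w_k)$ of IPHA. Define $x^* := P_\N(z^*)$ and $w^* := -rP_\M(z^*)$. The relation $0\in ATA(z^*)$ combined with the definition of $T$ (exactly as used in the proof of Theorem \ref{theo:alg.equiv}) shows that $(x^*,w^*)$ solves the SVI in extensive form. Since $\N$ and $\M$ are orthogonal complements and $P_\N(z_k)=x_k$, $P_\M(z_k)=-r^{-1}w_k$, Pythagoras yields
\begin{equation*}
\|z_k-z^*\|^2 = \|x_k-x^*\|^2 + r^{-2}\|w_k-w^*\|^2 = \|(x_k,w_k)-(x^*,w^*)\|_{M_r}^2.
\end{equation*}
Therefore linear convergence of $\{z_k\}$ to $z^*$ in the standard norm of $\L_n$ is identical to linear convergence of $\{(x_k,w_k)\}$ to $(x^*,w^*)$ in the $M_r$-norm, which is the desired conclusion.

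There is essentially no obstacle beyond carefully writing down the identification: the main point is already contained in Theorem \ref{theo:alg.equiv}, and the outer Lipschitz hypothesis is imposed directly on $(ATA)^{-1}$ so no transfer of regularity between $T$ and $ATA$ is needed. The only minor delicate step is noting that the $M_r$-norm on pairs $(x,w)\in\N\times\M$ is precisely the pullback of the $\L_n$-norm under the isometry $z\mapsto(P_\N(z),-rP_\M(z))$, which makes the rate transfer automatic rather than requiring a separate estimate.
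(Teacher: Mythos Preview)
Your proposal is correct and follows exactly the paper's approach: identify IPHA with HIPPM for $ATA$ via Theorem \ref{theo:alg.equiv}, apply Theorem \ref{rate:linear} to obtain linear convergence of $z_k=x_k-r^{-1}w_k$, and use the orthogonality of $\N$ and $\M$ to rewrite $\|z_k-z^*\|$ as $\|(x_k,w_k)-(x^*,w^*)\|_{M_r}$. The paper's proof is a two-line sketch of precisely this argument, so your version is simply a more carefully written expansion of it.
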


\begin{proof}
Note that, since $z_k=x_k-r^{-1}w_k$, it holds that  $\|z_k\|=\sqrt{\|x\|^2+r^{-2}\|w\|^2}=\|(x_k,w_x)\|_{M_r}$ and apply Theorem \ref{rate:linear}.

\end{proof}

The condition $(ATA)^{-1}$ Lipschitz continuous at zero is quite strong, there are simple functions for which $(ATA)^{-1}$ do not satisfy it for not being  single-valued at zero, consider, for example, $\Xi=\{0,1\}$, $C(\xi)= \mathbb R_+^{2}$ and $F(x,\xi)= M_\xi x + b_\xi$ where
$$M_\xi = \left(\begin{array}{cc}
2 & 1 \\
1+\xi & 2-\xi
\end{array} \right) \quad \text{ and } \quad 
b(\xi) = \left(\begin{array}{cc}
1 \\
1+\xi 
\end{array} \right). $$ 
The \textit{outer Lipschitz continuity} is weaker. In particular, if $F$ is a linear function, the mapping $F+N_C$ becomes a polyhedral mapping if $C$ is a closed convex polyhedral set, therefore $F+N_C$ is outer Lipschitz continuous (see \cite{DR:09}). It is not difficult to show that if $S$ is a polyhedral set-valued mapping, then its partial inverse also has this property, which constitutes the key to showing that for linear functions $(ATA)^{-1}$ is outer Lipschitz. So Corollary \ref{coro:outerLip} is a kind of extension of  \cite[Theorem 2]{RS:18}.

\section{Implementation issues and numerical examples}

The results in the previous section show that the IPHA has suitable convergence properties under rather mild conditions. Its performance will strongly depend on how efficiently the subproblems are solved. 

For instance, in the case of Lipschitz continuous functions, it is always possible to obtain approximated solutions of the subproblems via fixed-point algorithms (FPA) for adequate choices of the  parameter $r$. Indeed, provided Lipschitz continuous functions  $F(\cdot, \xi)$ and closed convex sets $C(\xi)$, the subproblems to be solved result 
$$ -F(x(\xi),\xi) +r(x_k(\xi)-x(\xi))-w_k(\xi) \in N_{C(\xi)} (x(\xi)), \quad \xi \in \Xi$$
which are equivalent to solve the following equations
$$x(\xi)=P_{C(\xi)}\Big(x_k(\xi)-\frac{1}{r}w_k(\xi)-\frac{1}{r}F(x(\xi),\xi))\Big), $$
where $P_{C(\xi)}$ is the projection mapping onto $C(\xi)$ for each $\xi \in \Xi$. Then, we are looking for fixed points of the operators
$$\Phi_{\xi}^k(z)=P_{C(\xi)}\Big(x_k(\xi)-\frac{1}{r}w_k(\xi)-\frac{1}{r}(F(z),\xi) )\Big).$$

To guarantee the convergence of the FPA, we require $F(\cdot, \xi)$ to be contractive. Then, for any $z, \bar{z}$ we have
\begin{equation}
\Big\|\Phi^k_{\xi}(z)-\Phi^k_{\xi}(\bar z)\Big\|\leq \frac{1}{r}\mu_{\xi}\|\bar z-z\|,
     \label{eq:Lip1}
\end{equation}
where $\mu_\xi$ is the Lipzchitz modulus of $F(\cdot, \xi)$. It is clear that the algorithm will converge when $r> \max\{\mu_\xi : \xi \in \Xi \}$.
\medskip

The FPA is effective but its linear convergence rate depends on the choice of large values for the parameter $r$, which can be counterproductive for the IPHA. Therefore, when using this kind of algorithm, there is a compromise in the choice of $r$. When the subproblems structure allows it, it could be preferable to implement other resolution strategies.

Another possible approach to approximate the subproblems solutions is based on the search of zeros of the mappings $\Phi_\xi^k - I$ instead of fixed points of $\Phi_\xi^k$. In the case that  $\Phi_\xi^k-I$ is semismooth, it is suitable to consider semismooth Newton methods (SNM) in the manner of  \cite{fp:03,IS:14}. When $F(\cdot, (\xi))$ are linear and $C(\xi)$ are polyhedral,  $\Phi_\xi^k-I$ turn to be piecewise linear. Therefore, they are  strongly semismooth and, under standard assumptions, SNM has a local quadratic convergence rate (see \cite{fp:03}), which is considerably better than the fixed point convergence. Also, there is no restriction for the choice of $r$.

 In the following sections, we shall illustrate these advantages using numerical examples for an equilibrium problem.

\subsection{Application example}
 We shall consider two companies E$_1$ and E$_2$ producing energy with given cost and price, each dependent on certain external factors (rain, fuel, availability of resources, etc.) and on the other company's production, with some restrictions regarding the demand and the production capacity, also dependent on these factors. The production is made in two stages; at stage one, the production level of E$_i$ is given by $x_i^1\in \mathbb{R}^{m_i}$ with a production cost $c_i^1x_i^1$, where $c_i^1$ is a positive constant in $\mathbb{R}^{m_i}$. We assume that the price at this stage, $p^1$, only depends on the production of both companies, namely, 
 $$p^1(x_1^1,x_2^1 )=\alpha^1 (a^1-\sum_{j}x_{1_j}^1-\sum_j x_{2_j}^1) $$
 with $\alpha^1$ and  $a^1$ positive constants. After that, the number of resources, for instance, may change, which would cause prices and costs to vary. We will represent these changes by random variables in a probability space associated with a set $\Xi=\{\xi_1,...,\xi_s\}\subset \mathbb{R}$ with a strictly positive probability distribution function. At second stage, if $\xi \in \Xi$ occurs, the cost for E$_i$ and the price are  given by 
$$  c_i^2(\xi)x_i^2 \quad \mbox{ and } \quad  p^2(x_1^2,x_2^2, \xi)=\alpha^2(\xi)(a^2(\xi)-\sum_{j}x_{1_j}^2-\sum_j x_{2_j}^2),$$
respectively, where $c_i^2(\cdot), \, \alpha^2(\cdot)$ and $a^2(\cdot)$ are positive. Therefore, for each E$_i$, the function to optimize is 
\begin{equation*}g_i(x,\xi)=c_i^1x_i^1-p^1(x^1_1,x^1_2)\sum_{j}x_{i_j}^1+c_i^2(\xi)x_i^2-p^2(x^2_1,x^2_2,\xi)\sum_{j}x_{i_j}^2.
\end{equation*} 
We can also consider some restrictions, depending on those random factors, about the production capacity  of each company, represented by the sets $$C_i(\xi)= \{(x^1_i,x^2_i): \ x^1_i,x^2_i \geq 0, \  x_{i_j}^1+x^2_{i_j}\leq\ell_{i_j}(\xi)\}.$$
Setting  $\Xi=\{\xi_1,...,\xi_s\}$, $n=2(m_1+m_2)$ and $F(\cdot, \xi):\R^n\to \R^n $ as
$$  F(x,\xi)=(\nabla_{x_1^1,x_1^2}g_1(x,\xi),\ 
\nabla_{x_2^1,x_2^2}g_2(x,\xi)),$$
finding a Nash equilibrium between the companies is to solve the SVI associated to $F$ and $C_i$, that is, to find $x(\cdot)\in \mathcal L_s $ such that $-\F (x(\cdot))\in N_{\C\cap \N} (x(\cdot))$ where
 $$ \F(x(\cdot))(\xi)=F(x(\xi),\xi),\quad
\mathcal{C}=\mathcal{C}_1\times \mathcal{C}_2,\quad
\N=\N_1\times \N_2,$$ 
with for each $i=1,2$ 
$$\mathcal{C}_i=\{ x_i(\cdot)=(x_i^1, x_i^2(\cdot)) : x_i(\xi) \in C(\xi)\} $$  and  $$ \mathcal{N}_i=\{x_i(\cdot)=(x_i^1, x_i^2(\cdot)) : x_i^1 \text{ is constant}\}.$$
 
  In the $k$th iteration of the IPHA, given $x_k $ and $w_k$, we need to approximately solve, for each $\xi \in \Xi,$ the inclusion 
$$ -F(x(\xi),\xi) +r(x_k(\xi)-x(\xi))-w_k(\xi) \in N_{C(\xi)} (x(\xi)).$$
  In this particular instance, reordering terms, the problems turn to be
\begin{equation}\label{subprob} \mbox{ find } x(\xi) \mbox{ such that } -M_{\xi} x(\xi) - b_\xi  -w_k(\xi)+ r[ x_k(\xi)-x(\xi)]\in N_{C(\xi)}(x(\xi)), \end{equation} 
  where $x(\xi)=(x_1^1,x_2^1,x_1^2(\xi),x_2^2(\xi))$  and $M_\xi \in \mathcal{M}(\mathbb{R},n)$

Observe that $F(\cdot,\xi)$ will be Lipschitz continuous with modulus $\mu_\xi=$   $\|M_\xi\|$. Therefore, as we established before, based on \eqref{eq:Lip1} the FPA solves the problem if $r> \max \{\|M_\xi\|: \xi \in \Xi\}$.

\subsection{Numerical experiments}
Here, we show some numerical trials for the application example described in the previous section. The algorithm performance was tested under different size problems, increasing the number of scenarios, the players' dimension, or the parameter $r$, using FPA or SNM alternatively.
We have coded the IPHA in Python 3.9, on a 1.80 GHz, 8 GB RAM, Intel Core i5 processor laptop. We set a fixed tolerance parameter $\sigma_k=0.5$ and the stopping condition was implemented as $\|x_k-P_\N(\w^k) -P_\M (\x^k)\|\leq 10^{-5}$. 
 The probability space and the information relating to prices, costs, and production limits are randomly generated.  
 
We show in table \ref{table:esce} the results from 50 to 500 scenarios. considering $m_{1}=m_{2}=10$, $r= \max \{\|M_\xi\|: \xi \in \Xi\}+ 0.1 $ for the FPA and $r=20$ for SNM. The number of iterations and the execution time increase considerably when using FPA while the first one remains almost the same and the second increases linearly for SNM. Actually, for 500 scenarios, solving the subproblems with the FPA requires more than 17 minutes, while the SNM takes less than 1 minute.
 
\begin{table}[H]
\captionsetup{width=.7\textwidth}
\setlength\extrarowheight{2pt}
    \centering
    \resizebox{0.6\textwidth}{!}{%
    \begin{tabular}{ P{25mm}|P{15mm} P{15mm}| P{15mm} P{15mm}}
    \hline
        \multirow{2}{*}{\textbf{Scenarios}} & \multicolumn{2}{c |}{ \textbf{Time avg. (s)}}  & \multicolumn{2}{c}{ \textbf{Iter. avg.}} \\ 
         & \textbf{FPA} & \textbf{SNM} & \textbf{FPA} & \textbf{SNM} \\ \hline
        50 & 11.80 & 4.35 & 2046 & 53 \\ \hline
        150 & 103.97 & 14.20 & 5703 & 51 \\ \hline
        300 & 410,55 & 29.81 & 12339 & 52 \\ \hline
        500 & 1013.45 & 65.2 & 18748 & 57 \\ 
            \hline
    \end{tabular}}
    \medskip
    
    \caption{Average time and iterations for each method when the number of scenarios increases. }
    \label{table:esce}
\end{table}

Table \ref{table:dimension} shows the evolution of the average time and the average number of iterations when executing the algorithm with 50 scenarios while the dimension of the problem ($m_1$ and $m_2$) increases. To clarify the size of the problems we are working with, observe that the solutions are functions $x:\Xi \to \mathbb{R}^{n}$ therefore, in this setting they lie between  $\mathbb{R}^{50\times 40}$ and $\mathbb{R}^{50\times 1000}$. The behavior is similar to the observed in table \ref{table:esce}.
 \begin{table}[H]
    \centering
\captionsetup{width=.7\textwidth}
\setlength\extrarowheight{2pt}
\resizebox{0.6\textwidth}{!}{%
    \begin{tabular}{ P{25mm}|P{13mm} P{13mm}|P{13mm} P{13mm}}
    \hline
          \multirow{2}{*}{\textbf{Dimension}} &
          \multicolumn{2}{ c| }{\textbf{Time avg. (s)}}   & \multicolumn{2}{ c }{ \textbf{Iter. Avg.}} \\ 
        ~ & \textbf{FPA} & \textbf{SNM} & \textbf{FPA} & \textbf{SNM} \\ \hline
        [10,10] & 11.80 & 3.90 & 2046 & 35 \\ \hline
        [50, 50] & 286.25 & 15.62 & 8713 & 27 \\ \hline
        [100, 100] & 918.87 & 29.37 & 16543 & 24 \\ \hline
        [250, 250] & 3292.73 & 218.88 & 28182 & 26 \\ \hline
    \end{tabular}}
    \caption{Average time and iterations for each method when the dimension increases, for 50 scenarios. }
    \label{table:dimension}
\end{table}

Lastly, we test the algorithm performance in terms of the choice of $r$. Table \ref{table:param_r} shows the average number of iterations and execution time with $r= \max \{\|M_\xi\|: \xi \in \Xi\}+ 0.1 +j$ when using the FPA and $r=j$ for the SNM, considering 150 and 200 scenarios. Note that in the case of the SNM $j$ can not be 0.

\begin{table}[H]
\frenchspacing
    \centering
    
\setlength\extrarowheight{2pt}
\resizebox{0.6\textwidth}{!}{%
    \begin{tabular}{P{25mm}| P{10mm}|P{13mm} P{13mm}|P{13mm} P{13mm}}
     \hline
        \multirow{2}{*}{S\textbf{cenarios}} & \multirow{2}{*}{\textbf{j}} & \multicolumn{2}{ c| }{\textbf{Time avg. (s)}}   & \multicolumn{2}{ c }{ \textbf{Iter. Avg.}}   \\ 
        ~ & ~ & \textbf{FPA} & \textbf{SNM} & \textbf{FPA} & \textbf{SNM} \\ \hline
        \multirow{6}{*}{50} & 0 & 16.86 & - & 2531 & - \\ 
        ~ & 4 & 16.99 & 2.81 & 2540 & 14 \\ 
        ~ & 10 & 15.30 & 3.90 & 2553 & 35 \\ 
        ~ & 20 & 17.62 & 4.35 & 2576 & 53 \\
        ~ & 30 & 22.10 & 5.75 & 2599 & 76 \\ 
        ~ & 50 & 19.72 & 9.56 & 2644 & 123 \\ \hline
        \multirow{6}{*}{150}& 4 & 143.19 & - & 5691 & - \\ 
        ~ & 10 & 114.25 & - & 5703 & - \\ 
        ~ & 20 & 90.01 & 14.20 & 5723 & 51 \\ 
        ~ & 30 & 92.65 & 21.08 & 5743 & 75 \\ 
        ~ & 40 & 92.20 & 28.87 & 5763 & 98 \\ 
        ~ & 50 & 91.70 & 29.40 & 5783 & 121 \\ \hline
        
    \end{tabular}}
    \captionsetup{width=.7\textwidth}
     \caption{Average time and iterations for each method when the parameter $r$ increases, considering $m_1=m_2=10.$ }
        \label{table:param_r}
    
\end{table}
 Observe that in general $\|M_\xi\|$ and therefore the parameter $r$ for the Fixed-point algorithm is quite significant considering the worked dimensions. For the problems solved here this $r$ is of order $10^3$, while for the SNM, we set $r\leq 50$. As expected, the performance of the IPHA is considerably better when applying the SNM. However, for small choices of the parameter $r$, we observed some instability in the resolution of the subproblems via SNM, generating the same behavior in the IPHA.

In almost all cases, we see a vast difference between solving the subproblems with one or another method. One possible reason is that solving the subproblems with the SNM gives a better approximation of the subproblem solution in fewer iterations; perhaps just a single iteration of SNM obtains a considerably small error than FPA.

\section{Conclusions and future work}
We presented the Progressive Hedging Algorithm for Stochastic Variational Inequalities where the generated subproblems can be approximately solved with an implementable tolerance condition. The scheme generalizes the exact version of the Progressive Hedging Algorithm, providing stronger convergence results.  We showed some numerical experiments in two-stage Nash games, solving the subproblems with the fixed point algorithm, and a semismooth Newton method, observing the advantages of each.

The main disadvantage of this scheme lies in the invariability of the parameter $r$. If we allowed its choice in each iteration, even stronger convergence results could be obtained. However, said generalization does not seem to be obtainable by proceeding as in this work. An alternative approach is introducing metric variables in the form of \cite{pls:08} that allow $r$ to be embedded in the respective matrices.




 \bibliographystyle{elsarticle-num} 
 \bibliography{IPHA}





\end{document}